\theoremstyle{plain}
\newtheorem{theorem}{Theorem}[section]
\theoremstyle{plain}
\newtheorem{lemma}[theorem]{Lemma}
\theoremstyle{plain}
\newtheorem{corollary}[theorem]{Corollary}
\theoremstyle{definition}
\newtheorem{definition}[theorem]{Definition}
\theoremstyle{plain}
\newtheorem{proposition}[theorem]{Proposition}
\theoremstyle{remark}
\theoremstyle{definition}
\newtheorem{example}[theorem]{Example}
\theoremstyle{plain}
\theoremstyle{plain}
\theoremstyle{plain}
\title[Proximity inductive dimension and Brouwer dimension]{Proximity inductive dimension and Brouwer dimension agree on compact Hausdorff spaces}
\author{Jeremy Siegert}
\address{University of Tennessee, Knoxville, USA} 
\email{jsiegert@vols.utk.edu}
\date{\today} 
\keywords{proximity, Brouwer dimension, dimensiongrad, inductive dimension}
\subjclass[2010]{54E05}
\begin{document}
\maketitle

\begin{abstract}
	In this paper we show that the proximity inductive dimension defined by Isbell agrees with the Brouwer dimension originally described by Brouwer on the class of compact Hausdorff spaces. Consequently, Fedorchuk's example of a compact Hausdorff space whose Brouwer dimension exceeds its Lebesgue covering dimension is an example of a space whose proximity inductive dimension exceeds its proximity dimension as defined by Smirnov.
\end{abstract}

\section{Introduction}

Proximity spaces in their modern form were described during the early 1950's by Efremovi\v{c}, \cite{efremovic1},\cite{efremovic2}. Variations of the original definition can be found in \cite{proximityspaces}. The structure is meant to capture the notion of what it means for two subsets of a space to be "close". Every proximity space has a natural completely regular topological structure and the class of proximity spaces is placed somewhat neatly between topological spaces and uniform spaces in that every uniform space induces a proximity structure whose corresponding topology is the uniform topology. Likewise every proximity space is induced by at least one uniform structure (see \cite{proximityspaces}). The dimension theory of proximity spaces began when Smirnov defined the proximity dimension $\delta d$ of proximity spaces using $\delta$-coverings, \cite{Smirnov}. This dimension function serves as a proximity invariant analog of the covering dimension $dim$. In the case of compact Hausdorff spaces, whose topology is induced by a unique proximity, the dimensions $\delta d$ and $dim$ coincide. A proximity invariant inductive dimension would not be defined until Isbell defined the notion of a "freeing set" and subsequently the proximity inductive dimension $\delta Ind$ in \cite{proximityinductivedimension}. Isbell remarked in \cite{findimunifspaces2} and \cite{isbell} that he did not know of an instance where $\delta Ind$ and $\delta d$ did not agree. In this paper we will show that a space constructed by Fedorchuk in \cite{spacewithdifferingdimensions} has distinct $\delta Ind$ and $\delta d$. We do this by shown that $\delta Ind$ and the Brouwer dimension $Dg$ agree on the class of compact Hausdorff spaces. For the sake of self-containment we review the necessary preliminary definitions in sections \ref{proximity spaces} and \ref{brouwer dimension} before proceeding to our main results in section \ref{main results}. Throughout this paper we use the notation $\overline{A}$ and $int(A)$ for the closure and interior of a subset $A$ within a topological space $X$.

\section{Proximity Spaces and their dimensions}\label{proximity spaces}

In this section we will review the necessary definitions and results surrounding proximity spaces. The citations are not necessarily where the corresponding definitions or results first appeared, but where they can be easily found. These initial definitions and results about proximity spaces can be found in \cite{proximityspaces}.
\vspace{\baselineskip}

\begin{definition}\label{proximity space definition}
	Let $X$ be a set and $\delta$ a binary relation on $2^{X}$. The relation $\delta$ is said to be a {\bf proximity relation}, or simply a {\bf proximity} on $X$, if the following axioms are satisfied for all $A,B,C\subseteq X$:
	\begin{enumerate}
		\item $A\delta B$ if and only if $B\delta A$.
		\item $(A\cup B)\delta C$ if and only if $A\delta C$ or $B\delta C$.
		\item $A\delta B$ implies that $A\neq\emptyset$ and $B\neq\emptyset$.
		\item $A\cap B\neq\emptyset$ implies that $A\delta B$.
		\item $A\bar{\delta}B$ implies that there is an $E\subseteq X$ such that $A\bar{\delta}E$ and $(X\setminus E)\bar{\delta}B$.
	\end{enumerate} 
Where $A\bar{\delta}B$ is interpreted as "$A\delta B$ is not true". A pair $(X,\delta)$ where $X$ is a set and $\delta$ is a proximity on $X$ is called a {\bf proximity space}. If for a proximity space $(X,\delta)$ the relation $\delta$ satisfies the additional axiom that for all $x,y\in X$, $\{x\}\delta\{y\}$ if and only if $x=y$ we call the proximity $(X,\delta)$ {\bf separated}.
\end{definition}

As mentioned in the introduction every proximity space has a natural topological structure. This topology is defined in the following way:

\begin{proposition}
	If $(X,\delta)$ is a proximity space then the function $cl:2^{X}\rightarrow 2^{X}$ defined by
	\[cl(A):=\{x\in X\mid \{x\}\delta A\}\]
	
	is a closure operator on $X$. Moreover, the corresponding topology is Hausdorff if and only if $(X,\delta)$ is separated.
\end{proposition}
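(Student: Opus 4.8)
The plan is to verify the Kuratowski closure axioms for $cl$ one at a time, reading each off from a proximity axiom, and then to settle the Hausdorff equivalence by translating the $T_1$/$T_2$ separation properties into statements about $\delta$ on singletons.

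First I would dispatch the three easy axioms. That $cl(\emptyset)=\emptyset$ is immediate from axiom (3), since $\{x\}\delta\emptyset$ can never hold. That $A\subseteq cl(A)$ follows from axiom (4): if $x\in A$ then $\{x\}\cap A\neq\emptyset$, so $\{x\}\delta A$. And $cl(A\cup B)=cl(A)\cup cl(B)$ is just axiom (2) applied with $C=\{x\}$, together with the symmetry axiom (1). In particular monotonicity, $A\subseteq B\Rightarrow cl(A)\subseteq cl(B)$, comes for free.

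The one axiom that requires work — and the place where axiom (5) is used — is idempotence, $cl(cl(A))=cl(A)$. The inclusion $cl(A)\subseteq cl(cl(A))$ is extensivity again, so it suffices to show that $x\notin cl(A)$ implies $x\notin cl(cl(A))$. Given $\{x\}\bar\delta A$, axiom (5) produces a set $E$ with $\{x\}\bar\delta E$ and $(X\setminus E)\bar\delta A$. The first observation is that $cl(A)\subseteq E$: if $y\in X\setminus E$ then $\{y\}\delta A$ would force $(X\setminus E)\delta A$ by (2) (since $(X\setminus E)=\{y\}\cup((X\setminus E)\setminus\{y\})$), contradicting $(X\setminus E)\bar\delta A$; hence $y\notin cl(A)$. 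The second observation is that $\{x\}\bar\delta E$ together with $cl(A)\subseteq E$ gives $\{x\}\bar\delta cl(A)$, again by (2), since $\{x\}\delta cl(A)$ would imply $\{x\}\delta E$. Thus $x\notin cl(cl(A))$, proving idempotence. By Kuratowski's theorem, $cl$ is then the closure operator of a unique topology on $X$; this is the "corresponding topology" referred to in the statement.

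For the topological part, note first that $cl(X)=X$ by extensivity, hence $cl(E)\cup cl(X\setminus E)=cl(X)=X$ for every $E\subseteq X$. If $(X,\delta)$ is separated and $x\neq y$, then $\{x\}\bar\delta\{y\}$, and axiom (5) yields $E$ with $\{x\}\bar\delta E$ and $\{y\}\bar\delta(X\setminus E)$; then $X\setminus cl(E)$ and $X\setminus cl(X\setminus E)$ are open neighborhoods of $x$ and $y$ respectively, and their intersection is $X\setminus\bigl(cl(E)\cup cl(X\setminus E)\bigr)=\emptyset$, so the topology is Hausdorff. Conversely, a Hausdorff topology is $T_1$, so $\{x\}=cl(\{x\})=\{y\mid\{y\}\delta\{x\}\}$ for every $x$, which says precisely that $\{x\}\delta\{y\}\iff x=y$ (the reverse implication being axiom (4)); hence $\delta$ is separated. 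I do not anticipate a real obstacle in this last part; the only point requiring care is checking that the sets exhibited in the Hausdorff argument are genuinely open, which is exactly why the identity $cl(E)\cup cl(X\setminus E)=X$ is isolated at the outset.
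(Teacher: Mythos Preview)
The paper does not supply a proof of this proposition; it is stated as background material with a reference to Naimpally--Warrack. Your argument is correct and is the standard verification of the Kuratowski axioms from the Efremovi\v{c} axioms: axioms (3), (4), and (2) handle $cl(\emptyset)=\emptyset$, extensivity, and additivity directly, and your use of axiom (5) to produce the interpolating set $E$ and then squeeze $cl(A)\subseteq E$ is exactly how idempotence is obtained. The Hausdorff equivalence is also handled correctly; the key identity $cl(E)\cup cl(X\setminus E)=X$ is what makes the two open sets disjoint, and the converse via $T_1$ is immediate. There is nothing to compare against in the paper itself, but nothing in your write-up needs repair.
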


We will call the topology on a proximity space $(X,\delta)$ described above the {\bf topology induced by the proximity} $\delta$. A set $U\subseteq X$ is open in the induced topology if and only if for all $x\in U$ one has that $\{x\}\bar{\delta}(X\setminus U)$.

\begin{proposition}\label{close if and only if closures are close}
	Let $(X,\delta)$ be a proximity space, then for all $A,B\subseteq X$
	
	\[A\delta B\iff \overline{A}\delta\overline{B}\]
	
	where $\overline{A}$ and $\overline{B}$ denote closure within the topology induced by $\delta$.
\end{proposition}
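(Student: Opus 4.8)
The plan is to prove the two implications separately. The forward direction is immediate once monotonicity of $\delta$ is recorded, and the reverse direction is where axiom~(5) of Definition~\ref{proximity space definition} does all the work.

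First I would note that $\delta$ is monotone: if $A\delta B$, $A\subseteq A'$ and $B\subseteq B'$, then $A'\delta B'$. Indeed, writing $A'=A\cup A'$, axiom~(2) gives $A'\delta B \iff (A\delta B \text{ or } A'\delta B)$, and since $A\delta B$ holds the disjunction holds, so $A'\delta B$; repeating in the second coordinate (using axiom~(1) to swap coordinates) upgrades this to $A'\delta B'$. Equivalently, $\bar\delta$ is monotone in the opposite direction. Since $A\subseteq\overline{A}$ and $B\subseteq\overline{B}$ always hold, monotonicity immediately yields $A\delta B\Rightarrow\overline{A}\delta\overline{B}$.

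For the converse I would prove the contrapositive, and in fact it suffices to establish the one-sided statement $A\bar\delta B\Rightarrow\overline{A}\,\bar\delta\,B$: applying it once gives $\overline{A}\,\bar\delta\,B$, then axiom~(1) gives $B\,\bar\delta\,\overline{A}$, a second application of the one-sided statement gives $\overline{B}\,\bar\delta\,\overline{A}$, and axiom~(1) again gives $\overline{A}\,\bar\delta\,\overline{B}$. To prove the one-sided statement, suppose $A\bar\delta B$ and use axiom~(5) to pick $E\subseteq X$ with $A\bar\delta E$ and $(X\setminus E)\bar\delta B$. I claim $\overline{A}\subseteq X\setminus E$: if $x\in E$ then $\{x\}\subseteq E$, so monotonicity of $\bar\delta$ gives $A\bar\delta\{x\}$, hence $\{x\}\bar\delta A$, which by the formula $cl(A)=\{x\mid\{x\}\delta A\}$ of the preceding proposition (identifying $cl$ with the topological closure $\overline{A}$) means $x\notin\overline{A}$. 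Thus $\overline{A}\subseteq X\setminus E$, and since $(X\setminus E)\bar\delta B$, monotonicity yields $\overline{A}\,\bar\delta\,B$, as required.

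The only genuinely substantive ingredient is axiom~(5); the rest is bookkeeping with axioms~(1), (2), (4) and the definition of $cl$. The one place to be careful is the direction in which the "freeing set" $E$ is used: one wants $E$ wedged between $A$ and $B$ so that $\overline{A}$ avoids $E$ while $B$ avoids $X\setminus E$, and once the inclusion $\overline{A}\subseteq X\setminus E$ is set up correctly the argument closes without difficulty.
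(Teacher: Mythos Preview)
Your proof is correct. Note, however, that the paper does not actually supply its own proof of this proposition: it is stated as a background fact with a reference to \cite{proximityspaces}, so there is nothing to compare your argument against directly. Your approach---monotonicity for the forward implication, and axiom~(5) together with the definition of $cl$ for the contrapositive of the reverse implication---is the standard one and would be at home in any reference on proximity spaces.

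One small remark: in your final paragraph you list axiom~(4) among the ingredients, but you never invoke it. The argument only needs axioms~(1), (2), (5) and the definition of $cl$; axiom~(4) plays no role here.
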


\begin{definition}
	Given a topological space $X$, a proximity relation $\delta$ on $X$ is said to be {\bf compatible} with the topology on $X$ if the topology induced by $\delta$ is the original topology on $X$.
\end{definition}

\begin{proposition}\label{unique proximity on compact hausdorff spaces}
	If $X$ is a compact Hausdorff space, then there is a unique proximity on $X$ that is compatible with the topology on $X$. It is defined by:
	
	\[A\delta B\iff \overline{A}\cap\overline{B}\neq\emptyset\]
	
	where $\overline{A}$ denotes the closure of $A$ in $X$.
\end{proposition}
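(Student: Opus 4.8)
The plan is to establish existence and uniqueness separately. For existence, set $A\,\delta\,B \iff \overline{A}\cap\overline{B}\neq\emptyset$ and verify the five axioms of Definition~\ref{proximity space definition}. Axioms (1), (3) and (4) are immediate from $\overline{\emptyset}=\emptyset$ and $A\cap B\subseteq\overline{A}\cap\overline{B}$, and axiom (2) follows from $\overline{A\cup B}=\overline{A}\cup\overline{B}$ together with distributivity of intersection over union. The only axiom requiring a topological hypothesis is (5): if $A\,\bar{\delta}\,B$, then $\overline{A}$ and $\overline{B}$ are disjoint closed sets, so since a compact Hausdorff space is normal there is an open $U$ with $\overline{A}\subseteq U\subseteq\overline{U}\subseteq X\setminus\overline{B}$; taking $E:=X\setminus U$, one checks $\overline{A}\cap\overline{E}=\emptyset$ and $\overline{X\setminus E}\cap\overline{B}=\overline{U}\cap\overline{B}=\emptyset$, which is exactly $A\,\bar{\delta}\,E$ and $(X\setminus E)\,\bar{\delta}\,B$. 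Compatibility is then immediate: the induced closure of $A$ is $\{x\in X : \overline{\{x\}}\cap\overline{A}\neq\emptyset\}$, which equals $\overline{A}$ because $X$ is $T_1$, so the induced topology is the given one (and $\delta$ is separated since $X$ is Hausdorff).

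For uniqueness, let $\delta'$ be any compatible proximity; I must show $A\,\delta'\,B \iff \overline{A}\cap\overline{B}\neq\emptyset$ for all $A,B\subseteq X$. The easy direction is: if $\overline{A}\cap\overline{B}\neq\emptyset$ then axiom (4) applied to the sets $\overline{A},\overline{B}$ gives $\overline{A}\,\delta'\,\overline{B}$, whence $A\,\delta'\,B$ by Proposition~\ref{close if and only if closures are close}. For the other direction I argue contrapositively: suppose $\overline{A}\cap\overline{B}=\emptyset$. Since $\delta'$ is compatible, its induced closures coincide with the topological ones, so by Proposition~\ref{close if and only if closures are close} it is enough to treat the case of two disjoint closed sets $C:=\overline{A}$ and $D:=\overline{B}$ and to prove $C\,\bar{\delta}'\,D$.

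Here is where compactness enters. For each $x\in C$ we have $x\notin D=\overline{D}$, so $\{x\}\,\bar{\delta}'\,D$; applying axiom (5) yields a set $E_x$ with $\{x\}\,\bar{\delta}'\,E_x$ and $(X\setminus E_x)\,\bar{\delta}'\,D$. The first relation means $x\notin\overline{E_x}$, so $\{X\setminus\overline{E_x}\}_{x\in C}$ is an open cover of the compact set $C$; pick a finite subcover indexed by $x_1,\dots,x_n$. Then $C\subseteq\bigcup_{i}(X\setminus\overline{E_{x_i}})\subseteq\bigcup_{i}(X\setminus E_{x_i})=X\setminus\bigcap_{i}E_{x_i}$. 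On the other hand, iterating axiom (2) from the relations $(X\setminus E_{x_i})\,\bar{\delta}'\,D$ gives $\big(\bigcup_i(X\setminus E_{x_i})\big)\,\bar{\delta}'\,D$, and since $\bar{\delta}'$ is monotone in each argument (again a consequence of axiom (2)) we conclude $C\,\bar{\delta}'\,D$, as wanted. Putting the two directions together with Proposition~\ref{close if and only if closures are close} shows that $\delta'$ coincides with $\delta$.

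I expect the compactness step in the uniqueness half to be the main obstacle, since it is the only place where compactness (rather than mere normality) is essential, and it is what pins down the relation on disjoint closed sets; the remaining verifications are routine manipulations of the proximity axioms.
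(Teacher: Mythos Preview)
Your proof is correct. The paper does not supply its own argument for this proposition: it is stated as a background result from the literature (with a reference to \cite{proximityspaces}) and left unproved, so there is nothing to compare against at the level of method. Your existence half is the standard verification using normality of compact Hausdorff spaces for axiom~(5), and your uniqueness half correctly isolates compactness as the essential ingredient, reducing to disjoint closed sets via Proposition~\ref{close if and only if closures are close} and then using a finite subcover together with finite additivity (axiom~(2)) to pass from the pointwise relations $\{x\}\,\bar{\delta}'\,D$ to $C\,\bar{\delta}'\,D$. One small stylistic remark: your final paragraph reads like a planning note rather than part of the proof; in a polished version you would simply carry out the compactness step (as you already do) and omit the commentary about which obstacle you ``expect'' to be the main one.
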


\begin{definition}
	If $A$ and $B$ are subsets of a proximity space $(X,\delta)$ then we say that $B$ is a $\delta$-{\bf neighbourhood} of $A$ if $A\bar{\delta}(X\setminus B)$. We denote this relationship by writing $A\ll B$.
\end{definition}

An elementary consequence of axiom $(5)$ of Definition \ref{proximity space definition} is that if $A$ and $B$ are subsets of a proximity space $(X,\delta)$ such that $A\ll B$ then there is a $C\subseteq X$ such that $A\ll C\ll B$. Moreover, it is an easy exercise to show that $A\ll B$ if and only if $\overline{A}\ll B$. That is, a set and its closure in the induced topology have the same $\delta$-neighbourhoods. A useful fact about $\delta$-neighbourhoods in proximity spaces whose topology is $T_{4}$ is the following:

\begin{lemma}\label{proximity neighbourhoods are actual neighbourhoods}
	Let $(X,\delta)$ be a separated proximity space whose induced topology is $T_{4}$. If $A,B\subseteq X$ are such that and $A\ll B$ then $A\subseteq int(B)$ where $int(B)$ denotes the interior of $B$ in $X$.
\end{lemma}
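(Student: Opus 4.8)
The plan is to get this from two ingredients that are already on the table: the elementary topological identity $int(B)=X\setminus\overline{X\setminus B}$, valid in any topological space, and Proposition \ref{close if and only if closures are close}, which lets one move freely between the proximity relation and closures in the induced topology. Since $A\subseteq\overline{A}$, it suffices to prove the formally stronger inclusion $\overline{A}\subseteq int(B)$, and by the identity above this is exactly the assertion that $\overline{A}\cap\overline{X\setminus B}=\emptyset$.

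To produce that disjointness I would simply unwind the definitions. By definition $A\ll B$ means $A\bar{\delta}(X\setminus B)$, and applying Proposition \ref{close if and only if closures are close} to the pair $A$, $X\setminus B$ upgrades this to $\overline{A}\bar{\delta}\overline{X\setminus B}$. Axiom $(4)$ of Definition \ref{proximity space definition}, in contrapositive form --- two subsets that are not $\delta$-related must be disjoint --- then applies to $\overline{A}$ and $\overline{X\setminus B}$ and gives $\overline{A}\cap\overline{X\setminus B}=\emptyset$. Hence $A\subseteq\overline{A}\subseteq X\setminus\overline{X\setminus B}=int(B)$, which is the claim. If one prefers to argue with $\delta$-neighbourhoods rather than with complements, an equivalent route is the following: by the interpolation consequence of axiom $(5)$ of Definition \ref{proximity space definition}, pick $C$ with $A\ll C\ll B$; then $A\ll C$ gives $\overline{A}\subseteq int(C)$ by the same computation, while $C\ll B$ gives $\overline{C}\subseteq B$, so $A\subseteq int(C)\subseteq C\subseteq B$ with $int(C)$ open, and again $A\subseteq int(B)$.

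I do not expect a genuine obstacle here: the whole content of the lemma is the translation between the relation $\delta$ and the closure and interior operators of the induced topology, and once Proposition \ref{close if and only if closures are close} and axiom $(4)$ have been invoked the conclusion is immediate. The only point that rewards a little care is keeping $\overline{X\setminus B}$ distinct from $X\setminus\overline{B}$. I will also note that the argument sketched above appears to use neither separatedness nor the $T_4$ hypothesis; these are simply the standing assumptions of the paper, so nothing is lost by carrying them along in the statement.
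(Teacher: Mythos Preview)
Your argument is correct and, up to the point where $\overline{A}$ and $\overline{X\setminus B}$ are shown to be disjoint, it is identical to the paper's proof: both unwind $A\ll B$ to $A\bar{\delta}(X\setminus B)$ and then pass to closures via Proposition \ref{close if and only if closures are close}. The difference is in the last step. The paper, having obtained disjoint closed sets $\overline{A}$ and $\overline{X\setminus B}$, invokes the $T_4$ hypothesis to produce an open $U$ with $\overline{A}\subseteq U$ and $U\cap\overline{X\setminus B}=\emptyset$, and then observes $U\subseteq B$. You instead apply the identity $int(B)=X\setminus\overline{X\setminus B}$ directly, which is more economical and confirms your observation that neither separatedness nor $T_4$ is actually used. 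In effect the paper's open set $U$ could simply be taken to be $X\setminus\overline{X\setminus B}$ itself, so the $T_4$ step is redundant; your version makes this explicit.
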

\begin{proof}
If $A\ll B$ then $\overline{A}\ll B$. By definition we then have that $\overline{A}\bar{\delta}(X\setminus B)$, which implies that $\overline{A}\bar{\delta}\overline{(X\setminus B)}$. Then $\overline{A}$ and $\overline{(X\setminus B)}$ are disjoint closed subsets of $X$. Because $X$ is $T_{4}$ there is an open set $U\subseteq X$ such that $\overline{A}\subseteq U$ and $U\cap\overline{(X\setminus B)}=\emptyset$. Therefore $A\subseteq U\subseteq B$, which is to say that $A\subseteq int(B)$.

\end{proof}
\vspace{\baselineskip}

With these initial basic definitions and results in hand we will proceed to stating the definitions and important results surround the proximity dimension $\delta d$. These definitions and results can be found in \cite{Smirnov}.

\begin{definition}
	Given a proximity space $(X,\delta)$ a finite cover $A_{1},\ldots, A_{n}$ of $X$ is called a $\delta$-cover if there is another finite cover $B_{1},\ldots, B_{n}$ of $X$ such that $B_{i}\ll A_{i}$ for $i=1,\ldots,n$. 
\end{definition}

\begin{definition}
	Given a proximity space $(X,\delta)$ the {\bf proximity dimension} of $X$, denoted $\delta d(X)$, is defined in the following way:
	\begin{enumerate}
		\item $\delta d(X)=-1$ if and only if $X=\emptyset$.
		\item For $n\geq 0$, $\delta d(X)\leq n$ if and only if every $\delta$-cover $\mathcal{U}$ can be refined by a $\delta$-cover of order at most $n+1$. 
		\item $\delta d(X)$ is the least integer $n$ such that $\delta d(X)\leq n$. If there is no such integer then $\delta d(X)=\infty$.
	\end{enumerate}
\end{definition}

\begin{theorem}\label{proximity dimension is covering dimension in compact hausdorff spaces}
	If $X$ is a compact hausdorff space, then $\delta d(X)=dim(X)$.
\end{theorem}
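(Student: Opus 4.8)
My plan is to work with the unique compatible proximity on $X$, namely $A\delta B\iff\overline A\cap\overline B\neq\emptyset$ (Proposition \ref{unique proximity on compact hausdorff spaces}), and to exploit that a compact Hausdorff space is normal, hence $T_{4}$, so that Lemma \ref{proximity neighbourhoods are actual neighbourhoods} is available. I will also invoke the standard characterization of covering dimension for normal spaces: $dim(X)\le n$ if and only if every finite open cover of $X$ has a finite open refinement of order at most $n+1$. The idea is then to show that the finite open covers of $X$ and the $\delta$-covers of $X$ are cofinal in one another under refinement, via a passage that never raises the order; granting this, $\delta d(X)=dim(X)$ follows immediately, with both sides equal to $-1$ when $X=\emptyset$.

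First I would establish two observations. (A) Every finite open cover $\{U_i\}_{i=1}^{m}$ of $X$ is a $\delta$-cover: the shrinking lemma for normal spaces provides an open cover $\{V_i\}_{i=1}^{m}$ with $\overline{V_i}\subseteq U_i$, and then $\overline{V_i}\cap\overline{X\setminus U_i}=\overline{V_i}\cap(X\setminus U_i)=\emptyset$, so $V_i\ll U_i$ and $\{V_i\}$ witnesses that $\{U_i\}$ is a $\delta$-cover. (B) If $\{A_i\}_{i=1}^{m}$ is a $\delta$-cover witnessed by $\{B_i\}_{i=1}^{m}$, then $\{int(A_i)\}_{i=1}^{m}$ is a finite open cover of $X$ which refines $\{A_i\}$ and whose order does not exceed that of $\{A_i\}$: since $B_i\ll A_i$, Lemma \ref{proximity neighbourhoods are actual neighbourhoods} gives $B_i\subseteq int(A_i)$, so the interiors already cover $X$, while $int(A_i)\subseteq A_i$ yields the refinement and the order bound.

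With (A) and (B) in hand the two inequalities should be routine. For $\delta d(X)\le dim(X)$, I would assume $dim(X)\le n$, take an arbitrary $\delta$-cover $\{A_i\}$, pass by (B) to the finite open refinement $\{int(A_i)\}$, choose a finite open refinement $\mathcal W$ of $\{int(A_i)\}$ of order at most $n+1$, and note via (A) that $\mathcal W$ is a $\delta$-cover still refining $\{A_i\}$; hence $\delta d(X)\le n$. For $dim(X)\le\delta d(X)$, I would assume $\delta d(X)\le n$, take an arbitrary finite open cover $\{U_i\}$, which is a $\delta$-cover by (A), choose a $\delta$-cover $\{A_j\}$ of order at most $n+1$ refining it, and observe via (B) that $\{int(A_j)\}$ is then a finite open refinement of $\{U_i\}$ of order at most $n+1$; hence $dim(X)\le n$.

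I do not expect a serious obstacle: the substantive inputs are the description of the proximity on a compact Hausdorff space, the fact that $\delta$-neighbourhoods are genuine topological neighbourhoods in the $T_{4}$ setting, and the shrinking lemma, and everything else is bookkeeping with refinements and orders. The one point that genuinely uses the proximity structure — and the only place where care is required — is observation (B): one must check that replacing the $A_i$ by their interiors does not destroy the covering property, which works precisely because the witnessing cover $\{B_i\}$ is squeezed inside the interiors by Lemma \ref{proximity neighbourhoods are actual neighbourhoods}. I would also make sure the $X=\emptyset$ convention is handled consistently with the definitions of both $\delta d$ and $dim$.
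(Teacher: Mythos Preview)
Your argument is correct and is the standard one; however, you should be aware that the paper does not actually supply a proof of this theorem. It is quoted as a known result from Smirnov's original paper \cite{Smirnov} (the surrounding passage says ``These definitions and results can be found in \cite{Smirnov}''), so there is no in-text proof to compare against.

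That said, your proof stands on its own. The two observations you isolate are exactly the right ones: (A) uses normality via the shrinking lemma to see that finite open covers are $\delta$-covers, and (B) uses Lemma \ref{proximity neighbourhoods are actual neighbourhoods} to pass from an arbitrary $\delta$-cover to a finite open cover without increasing order. The back-and-forth between the two notions of cover then gives both inequalities with no loss. The only cosmetic remark is that in (B) you do not actually need the full strength of Lemma \ref{proximity neighbourhoods are actual neighbourhoods}: already $B_i\bar{\delta}(X\setminus A_i)$ together with the explicit description of $\delta$ on a compact Hausdorff space gives $\overline{B_i}\cap\overline{X\setminus A_i}=\emptyset$, hence $\overline{B_i}\subseteq int(A_i)$ directly. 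But invoking the lemma is perfectly fine and keeps the write-up clean.
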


Note that there is no ambiguity in Theorem \ref{proximity dimension is covering dimension in compact hausdorff spaces} as Proposition \ref{unique proximity on compact hausdorff spaces} grants that there is only one possible interpretation of $\delta d$ on compact Hausdorff spaces.
\vspace{\baselineskip}

Next we proceed to Isbell's proximity inductive dimension. These definitions and results can be found in \cite{isbell}.
 
\begin{definition}\label{freeing set definition}
	Given a proximity space $(X,\delta)$ and two subsets $A,B\subseteq X$ such that $A\bar{\delta}B$, a subset $D\subseteq X$ is said to $\delta$-{\bf separate} $A$ and $B$, or is a $\delta$-{\bf separator} between $A$ and $B$, if $X\setminus D=U\cup V$ where $A\subseteq U,\,B\subseteq V,\,U\cap V=\emptyset,$ and $U\bar{\delta} V$. A subset $H\subseteq X$ is said to {\bf free} $A$ and $B$, or be a {\bf freeing set} for $A$ and $B$, if $H\bar{\delta}(A\cup B)$ and every $\delta$-neighbourhood of $H$ that is disjoint from $A\cup B$ is a $\delta$-separator between $A$ and $B$. 
\end{definition}

\begin{definition}\label{proximity inductive dimension definition}
	Given a proximity space $(X,\delta)$ the {\bf proximity inductive dimension} of $X$, denoted $\delta Ind(X)$, is defined inductively:
	\begin{enumerate}
		\item $\delta Ind(X)=-1$ if and only if $X=\emptyset$.
		\item For $n\geq 0$, $\delta Ind(X)\leq n$ if and only if for every pair of subsets $A,B\subseteq X$ such that $A\bar{\delta}B$ there is a set $H\subseteq X$ that frees $A$ and $B$ and is such that $\delta Ind(H)\leq n-1$.
		\item $\delta Ind(X)$ is the least integer $n$ such that $\delta Ind(X)\leq n$. If there is no such $n$, then $\delta Ind(X)=\infty$.
	\end{enumerate}
\end{definition}

\begin{proposition}\label{dense subspaces have larger inductive dimension}
	If $(Y,\delta)$ is a proximity space and $X\subseteq Y$ is a dense subspace, then $\delta Ind(X)\geq\delta Ind(Y)$.
\end{proposition}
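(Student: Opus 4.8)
The plan is as follows. The cases $\delta Ind(X)=-1$ (then $X=\emptyset$, so density forces $Y=\emptyset$) and $\delta Ind(X)=\infty$ are immediate, so fix $n\geq 0$ and assume $\delta Ind(X)\leq n$; I will show $\delta Ind(Y)\leq n$. Here $X$ carries the subspace proximity, and since the proximity a set inherits is the same whether it is taken through $X$ or directly through $Y$, the quantity $\delta Ind$ of a subset of $X$ is unambiguous — so, somewhat pleasantly, no further induction on $n$ will be required. Fix $A\bar{\delta}B$ in $Y$. Iterating axiom (5) of Definition \ref{proximity space definition} (and replacing each interpolating set by its interior, using Proposition \ref{close if and only if closures are close}, to keep them open) I would produce a chain of open sets $A\ll G_0\ll G_1\ll G_2\ll G_3\ll Y\setminus B$, and set $P:=X\cap G_1$ and $Q:=X\setminus G_2$. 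Since $G_1\bar{\delta}(Y\setminus G_2)$ we have $P\bar{\delta}Q$ in $X$, so by hypothesis there is $H'\subseteq X$ which frees $P$ and $Q$ in $X$ with $\delta Ind(H')\leq n-1$. The claim is that this same $H'$ frees $A$ and $B$ in $Y$; granting this we are done, since $\delta Ind(H')\leq n-1$ holds verbatim in $Y$.

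The workhorse is a density observation: if $X$ is dense in $Y$ and $M\ll N$ in $Y$, then $M\subseteq\overline{X\cap N}$. (Given $p\in M$ we have $\{p\}\bar{\delta}(Y\setminus N)$; were also $\{p\}\bar{\delta}(X\cap N)$, then axiom (2) would give $\{p\}\bar{\delta}\bigl((X\cap N)\cup(Y\setminus N)\bigr)$, and since $X\subseteq(X\cap N)\cup(Y\setminus N)$ this would force $\{p\}\bar{\delta}X$, contradicting $\overline{X}=Y$.) Applying it to $G_0\ll G_1$ yields $G_0\subseteq\overline{P}$, whence $H'\bar{\delta}P$ implies $H'\bar{\delta}\overline{P}$ and so $H'\bar{\delta}A$; symmetrically, applying it to $(Y\setminus G_3)\ll(Y\setminus G_2)$ gives $Y\setminus G_3\subseteq\overline{Q}$ and hence $H'\bar{\delta}B$. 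Thus $H'\bar{\delta}(A\cup B)$, which is the first requirement in the definition of a freeing set.

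For the second requirement, let $N$ be any $\delta$-neighbourhood of $H'$ in $Y$ disjoint from $A\cup B$; I must exhibit a partition $Y\setminus N=U\sqcup V$ with $A\subseteq U$, $B\subseteq V$ and $U\bar{\delta}V$. First choose $N_0$ with $H'\ll N_0\ll N$, so that $\overline{N_0}\subseteq int(N)$ (Proposition \ref{close if and only if closures are close}), and put $N_1:=N_0\setminus(P\cup Q)$; this is still a $\delta$-neighbourhood of $H'$ because $H'\bar{\delta}P$ and $H'\bar{\delta}Q$. Then $N':=N_1\cap X$ is a $\delta$-neighbourhood of $H'$ in $X$ disjoint from $P\cup Q$, so because $H'$ frees $P$ and $Q$ in $X$ it is a $\delta$-separator there: $X\setminus N'=S\sqcup T$ with $P\subseteq S$, $Q\subseteq T$, $S\bar{\delta}T$. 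By the density observation and $P\subseteq S$, $Q\subseteq T$ we get $A\subseteq\overline{S}$ and $B\subseteq\overline{T}$, and $S\bar{\delta}T$ gives $\overline{S}\bar{\delta}\overline{T}$, so $(\overline{S}\cup A)\bar{\delta}(\overline{T}\cup B)$; now interpolate open sets $(\overline{S}\cup A)\ll M_0\ll M_1\ll M_2\ll M_3\ll Y\setminus(\overline{T}\cup B)$.

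The crux — and the step I expect to cause the most trouble — is to show that the ``band'' between the two sides is absorbed by $N$: namely $M_3\setminus\overline{M_0}\subseteq\overline{N_1}\subseteq N$. Indeed, a point of $X$ lying in the open set $M_3\setminus\overline{M_0}$ avoids $\overline{S}\subseteq M_0$ and avoids $T\subseteq Y\setminus M_3$, hence lies in $X\setminus(S\cup T)=N'\subseteq N_1$; since $M_3\setminus\overline{M_0}$ is open and $X$ is dense, the whole set therefore lies in $\overline{N_1}\subseteq int(N)$. In particular $\overline{M_2}\setminus M_1\subseteq M_3\setminus\overline{M_0}\subseteq N$. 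Finally put $U:=(Y\setminus N)\cap M_1$ and $V:=(Y\setminus N)\setminus M_1$. Then $A\subseteq M_0\subseteq M_1$ gives $A\subseteq U$; $B\subseteq Y\setminus M_3\subseteq Y\setminus M_1$ gives $B\subseteq V$; and $V\cap\overline{M_2}\subseteq(Y\setminus N)\cap(\overline{M_2}\setminus M_1)=\emptyset$ gives $V\subseteq Y\setminus\overline{M_2}$, so $U\bar{\delta}V$ follows from $U\subseteq M_1$, $V\subseteq Y\setminus\overline{M_2}$ and $M_1\bar{\delta}(Y\setminus\overline{M_2})$ (a consequence of $M_1\ll M_2$). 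Hence every such $N$ is a $\delta$-separator between $A$ and $B$, so $H'$ frees $A$ and $B$ in $Y$ and $\delta Ind(Y)\leq n$, as required.
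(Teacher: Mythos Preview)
The paper does not actually prove this proposition; it is merely stated, with the surrounding results attributed to Isbell's \cite{isbell}. There is therefore nothing to compare against in the paper itself.

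Your argument is correct. The key device---pulling the problem into $X$ by choosing a long $\ll$-chain $A\ll G_0\ll\cdots\ll G_3\ll Y\setminus B$, taking a freeing set $H'$ for the traces $P=X\cap G_1$ and $Q=X\setminus G_2$, and then showing that \emph{the same} $H'$ frees $A$ and $B$ in $Y$---works exactly as you describe. The density observation ($M\ll N\Rightarrow M\subseteq\overline{X\cap N}$) is the right tool for transferring far-ness from $X$ to $Y$, and the ``band'' step $M_3\setminus\overline{M_0}\subseteq\overline{N_1}\subseteq N$ is verified correctly: any $x\in X$ in that open band misses both $S$ and $T$, hence lies in $N'$, and density pushes the whole band into $\overline{N_1}\subseteq\overline{N_0}\subseteq\operatorname{int}(N)$. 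Your remark that no outer induction on $n$ is needed is also right, since the subspace proximity on $H'$ is the same whether computed via $X$ or directly in $Y$, so $\delta Ind(H')\le n-1$ is unambiguous. One small point of bookkeeping: when you write $(\overline{S}\cup A)\bar\delta(\overline{T}\cup B)$ you are really just using $\overline{S}\bar\delta\overline{T}$ together with $A\subseteq\overline{S}$ and $B\subseteq\overline{T}$, so the unions are redundant---but harmless.
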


\begin{proposition}\label{proximtiy inductive dimension is greater than proximity dimension}
	For every proximity space $(X,\delta)$, $\delta Ind(X)\geq\delta d(X)$.
\end{proposition}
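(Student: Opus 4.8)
The plan is to prove the stronger statement that $\delta Ind(X)\le n$ implies $\delta d(X)\le n$, by induction on $n=\delta Ind(X)$, mirroring the classical proof that $\dim\le\mathrm{Ind}$ on normal spaces but with $\delta$-covers and freeing sets in place of open covers and partitions. The base case $n=-1$ is trivial: $\delta Ind(X)=-1$ forces $X=\emptyset$, and then $\delta d(X)=-1$. For the inductive step the key reformulation is the $\delta$-analogue of the Eilenberg--Otto--Hemmingsen characterization of covering dimension: $\delta d(X)\le n$ holds if and only if for every family $(P_i,Q_i)_{i=1}^{n+1}$ of $n+1$ pairs with $P_i\bar\delta Q_i$ there exist $\delta$-separators $S_i$ between $P_i$ and $Q_i$ with $\bigcap_{i=1}^{n+1}S_i=\emptyset$. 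I would first establish (or cite from \cite{Smirnov}) both directions of this equivalence, deriving it from the cover axioms and axiom~$(5)$ of Definition~\ref{proximity space definition} by the usual swelling/shrinking manipulations, using the elementary fact that $A\ll B$ implies $A\subseteq B$.

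Granting this, assume the inductive hypothesis that $\delta d(Y)\le n-1$ for every proximity space $Y$ with $\delta Ind(Y)\le n-1$, and suppose $\delta Ind(X)\le n$. Given $n+1$ pairs $(P_i,Q_i)$ with $P_i\bar\delta Q_i$, Definition~\ref{proximity inductive dimension definition} supplies a freeing set $H$ for $(P_1,Q_1)$ with $\delta Ind(H)\le n-1$. I would immediately replace $H$ by its closure: by Proposition~\ref{close if and only if closures are close}, $\overline H\,\bar\delta\,(P_1\cup Q_1)$, and since a set and its closure have the same $\delta$-neighbourhoods, $\overline H$ is again a freeing set for $(P_1,Q_1)$; moreover $\delta Ind(\overline H)\le\delta Ind(H)\le n-1$ by Proposition~\ref{dense subspaces have larger inductive dimension}. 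Thus, with $\overline H$ now closed in $X$, the inductive hypothesis gives $\delta d(\overline H)\le n-1$.

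Next I would push the remaining data into $\overline H$ and back out. The pairs $(P_i\cap\overline H,\,Q_i\cap\overline H)$ for $i=2,\dots,n+1$ are $n$ pairs that are far apart in $(\overline H,\delta|_{\overline H})$, so the characterization applied inside $\overline H$ yields $\delta$-separators $S_2',\dots,S_{n+1}'$ there with $\bigcap_{i=2}^{n+1}S_i'=\emptyset$. A subspace-extension lemma for $\delta$-separators — the proximity analogue of the fact that a partition in a closed subspace extends to a partition of the whole space, and the place where closedness of $\overline H$ is used — then produces $\delta$-separators $S_2,\dots,S_{n+1}$ of $X$ between the original pairs whose traces on $\overline H$ are the $S_i'$. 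Finally, using that $\overline H$ frees $(P_1,Q_1)$ one selects a $\delta$-separator $S_1$ between $P_1$ and $Q_1$ concentrated about $\overline H$, and one checks $\bigcap_{i=1}^{n+1}S_i=\emptyset$, which via the characterization gives $\delta d(X)\le n$.

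The main obstacle I anticipate is precisely the last step, and more specifically the mismatch between Isbell's definition and the classical argument: in Definition~\ref{proximity inductive dimension definition} it is the freeing set $H$ that carries the dimension bound, while its $\delta$-separators are the (strictly larger) $\delta$-neighbourhoods of $H$; but to force $\bigcap_{i=1}^{n+1}S_i=\emptyset$ one needs $S_1$ to behave essentially like $\overline H$ itself, and one needs the extended separators $S_i$ to interact with $S_1$ exactly as the $S_i'$ interact with $\overline H$. Reconciling this — i.e. upgrading a freeing set to a genuinely low-dimensional $\delta$-separator and tuning the extension lemma so that the traces match on the nose — is where the real work lies; by comparison the combinatorial reduction to $n+1$ pairs, the axiom-$(5)$ interpolations, and the inductive bookkeeping are routine.
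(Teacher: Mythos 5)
Your outline never closes at the decisive point, and you say so yourself: the step ``one selects a $\delta$-separator $S_1$ concentrated about $\overline H$ and checks $\bigcap_{i=1}^{n+1}S_i=\emptyset$'' is exactly where the classical $\dim\le\mathrm{Ind}$ argument does \emph{not} transfer, and you leave it unresolved. In the classical proof the first set in the empty intersection is the partition $C_1$ itself, which is the set carrying the inductive dimension bound. In Isbell's setting the dimension bound sits on the freeing set $\overline H$, which in general is \emph{not} a $\delta$-separator (Example \ref{counterexample} in the paper is precisely such a failure); only strictly larger $\delta$-neighbourhoods $D\supseteq\overline H$ are $\delta$-separators. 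But your extension lemma controls the traces of $S_2,\dots,S_{n+1}$ only on $\overline H$, not on $D\setminus\overline H$, so nothing prevents $\bigcap_i S_i$ from meeting $D\setminus\overline H$. To close this you would have to push the separator data from $\overline H$ out to an entire $\delta$-neighbourhood $D$ of $\overline H$ disjoint from $P_1\cup Q_1$ (for instance via a result that $\delta$-covers or $\delta$-separator families of a subspace with $\delta d\le n-1$ extend over a suitable $\delta$-neighbourhood) and then take $S_1=D$; as written, identifying the obstacle is not the same as overcoming it, so the proposal is not a proof.

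Two further caveats. First, your two auxiliary ingredients --- the Eilenberg--Otto-type characterization of $\delta d$ via $n+1$ pairs and $\delta$-separators with empty intersection, and the extension of $\delta$-separators from a closed subspace --- are themselves nontrivial in an arbitrary proximity space, whose induced topology is only completely regular, not normal; the topological swelling/partition-extension arguments do not apply verbatim and must be redone through axiom (5) interpolation or located in \cite{Smirnov}/\cite{isbell}. Second, for calibration: the paper does not prove this proposition at all --- it is quoted from Isbell \cite{isbell} --- so there is no argument in the text to match; judged on its own, your sketch is an honest plan with the central difficulty still open.
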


We note that Proposition \ref{dense subspaces have larger inductive dimension} implies that in Definition \ref{proximity inductive dimension definition} we could have taken $A,B,$ and $H$ to be closed without altering the value of $\delta Ind$.
\vspace{\baselineskip}

\begin{definition}
	Let $X$ be a topological space and $A,B\subseteq X$ disjoint closed subsets. A closed set $C\subseteq X$ is called a {\bf separator} in $X$ between $A$ and $B$ if there are disjoint open sets $U,V\subseteq X$ such that $X\setminus C=U\cup V$,  with $A\subseteq U$ and $V\subseteq V$.
\end{definition}

The following result is an easy exercise, whose proof can be found in \cite{findimunifspaces2}.

\begin{proposition}
	Let $X$ be a compact Hausdorff space and $A,B\subseteq X$ disjoint closed subsets. If $C\subseteq X$ is a separator in $X$ between $A$ and $B$, then $C$ frees $A$ and $B$.
\end{proposition}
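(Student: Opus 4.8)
The plan is to verify directly the two conditions in the definition of a freeing set (Definition \ref{freeing set definition}), exploiting the fact that on a compact Hausdorff space the unique compatible proximity is given by $E\delta F\iff\overline E\cap\overline F\neq\emptyset$ (Proposition \ref{unique proximity on compact hausdorff spaces}). First I would record that $C$ is disjoint from $A\cup B$: since $C$ is a separator we have $X\setminus C=U\cup V$ with $A\subseteq U$, $B\subseteq V$, and $U,V$ disjoint open sets, so $A\cup B\subseteq X\setminus C$. Both $C$ and $A\cup B$ are closed and they are disjoint, hence $\overline C\cap\overline{A\cup B}=\emptyset$, i.e.\ $C\bar{\delta}(A\cup B)$; this is the first requirement.

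For the second requirement, let $N$ be an arbitrary $\delta$-neighbourhood of $C$ with $N\cap(A\cup B)=\emptyset$; I must exhibit a decomposition $X\setminus N=U'\cup V'$ with $A\subseteq U'$, $B\subseteq V'$, $U'\cap V'=\emptyset$ and $U'\bar{\delta}V'$, witnessing that $N$ is a $\delta$-separator between $A$ and $B$. Put $F:=\overline{X\setminus N}$. Since $N$ is a $\delta$-neighbourhood of $C$ we have $C\bar{\delta}(X\setminus N)$, which by the proximity formula above means $\overline C\cap F=\emptyset$; in particular $F\cap C=\emptyset$, so $F\subseteq X\setminus C=U\cup V$ and a fortiori $X\setminus N\subseteq U\cup V$. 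Now define $U':=(X\setminus N)\cap U$ and $V':=(X\setminus N)\cap V$. Disjointness of $U$ and $V$ makes $U'$ and $V'$ disjoint, the inclusion $X\setminus N\subseteq U\cup V$ makes $U'\cup V'=X\setminus N$, and since $N$ misses $A\cup B$ we get $A\subseteq U\cap(X\setminus N)=U'$ and $B\subseteq V\cap(X\setminus N)=V'$.

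It remains to prove $U'\bar{\delta}V'$, which is the one step that is not pure bookkeeping and hence the main (if modest) obstacle. Since $F$ is closed and $U'\subseteq F$, $U'\subseteq U$, we obtain $\overline{U'}\subseteq F\cap\overline U$; symmetrically $\overline{V'}\subseteq F\cap\overline V$, so $\overline{U'}\cap\overline{V'}\subseteq F\cap\overline U\cap\overline V$. This last set is empty: any point $p$ in it lies in $F\subseteq U\cup V$, and if, say, $p\in U$, then the open set $U$ is a neighbourhood of $p$ disjoint from $V$, contradicting $p\in\overline V$ (the case $p\in V$ is symmetric). Hence $\overline{U'}\cap\overline{V'}=\emptyset$, i.e.\ $U'\bar{\delta}V'$, so $N$ is indeed a $\delta$-separator between $A$ and $B$; as $N$ was an arbitrary $\delta$-neighbourhood of $C$ disjoint from $A\cup B$, this shows $C$ frees $A$ and $B$. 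The genuine use of the hypotheses is confined to this emptiness claim, where one needs that $U$ and $V$ are open and that $F$ is trapped inside their union; alternatively the inclusion $X\setminus N\subseteq U\cup V$ can be derived from Lemma \ref{proximity neighbourhoods are actual neighbourhoods} via $C\subseteq int(N)$, though this is not required.
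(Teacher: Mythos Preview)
Your proof is correct. Note, however, that the paper does not actually supply its own proof of this proposition: it simply records the result as ``an easy exercise'' and refers the reader to \cite{findimunifspaces2}, so there is no in-paper argument to compare against. Your direct verification of the two conditions in Definition~\ref{freeing set definition}---using that the unique compatible proximity on a compact Hausdorff space satisfies $E\bar\delta F\iff\overline E\cap\overline F=\emptyset$, and then trapping $\overline{U'}\cap\overline{V'}$ inside $F\cap\overline U\cap\overline V$ to force it to be empty---is exactly the sort of elementary argument the paper is gesturing at, and is in fact structurally parallel to the $(1)\Rightarrow(2)$ direction of the paper's later Proposition~\ref{freeing sets are cuts}.
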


The converse to the above result is not true. 

\begin{example}\label{counterexample}
	Let $X$ be the "Topologist's Sine Curve". That is $X$ is the closure of the set
	
	\[A=\{(x,\sin(1/x))\in\mathbb{R}^{2}\mid x\in(0,1]\}\]

in $\mathbb{R}^{2}$. If we define $A=\{(0,-1)\}$, $B=\{(1,\sin(1))\}$, and $C=\{(0,1)\}$ then $C$ frees $A$ and $B$, but is not a separator between them.
\end{example}

\section{Brouwer Dimension}\label{brouwer dimension}

Here we will review the basic definitions and results surround the Brouwer dimension. For a brief history of the invariant see \cite{Brouwerdimoncompacthausdorff} or \cite{spacewithdifferingdimensions}.

\begin{definition}
	A {\bf continuum} is a nonempty compact connected Hausdorff space.
\end{definition}

In some places in the literature (such as \cite{nagami}) the word "compactum" is used for nonempty compact connected Hausdorff spaces. This is likely to distinguish the more general definition from the perhaps more standard definition of a continuum as a nonempty compact connected \emph{metric} space.

\begin{definition}\label{cut definition}
	Given a topological space $X$ and two disjoint closed subsets $A,B\subseteq X$, a closed subset $C\subseteq X$ that is disjoint from $(A\cup B)$ is called a {\bf cut} between $A$ and $B$ if every continuum $K\subseteq X$ such that $K\cap A\neq\emptyset$ and $K\cap B\neq\emptyset$ also satisfies $K\cap C\neq\emptyset$.
\end{definition}

It is an easy exercise to show that every separator in a topological space is also a cut. However as Example \ref{counterexample} shows, not every cut is a separator.

\begin{definition}
	Let $X$ be a $T_{4}$ topological space. The {\bf Brouwer dimension} of $X$, denoted $Dg(X)$, is defined inductively:
	\begin{enumerate}
		\item $Dg(X)=-1$ if and only if $X=\emptyset$.
		\item For $n\geq 0$, $Dg(X)\leq n$ if and only if for every pair of disjoint closed sets $A,B\subseteq X$ there is a cut $C\subseteq X$ between $A$ and $B$ such that $Dg(C)\leq n-1$.
		\item $Dg(X)$ is the least integer $n$ such that $Dg(X)\leq n$. If there is no such integer then $Dg(X)=\infty$.
	\end{enumerate}
\end{definition}

The following result appears in \cite{nagami} and will be used in the next section.

\begin{lemma}\label{important lemma}
	Let $X$ be a compact Hausdorff space, $A$ and $B$ disjoint closed subsets of $X$. If there is no connected set $K$ such that $K\cap A\neq\emptyset$ and $K\cap B\neq\emptyset$, then the empty set separates $A$ and $B$.
\end{lemma}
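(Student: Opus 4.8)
The plan is to exploit the classical relationship between connected components and quasi-components in compact Hausdorff spaces. Recall that the quasi-component of a point $x\in X$ is the intersection of all clopen subsets of $X$ containing $x$, and that in a compact Hausdorff space this quasi-component coincides with the connected component of $x$. I will use this identity to upgrade "$x$ is in no connected set meeting $B$" to "$x$ is in a clopen set missing $B$", and then patch these clopen sets together using compactness of $A$.

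First I would observe that for each $x\in A$, the connected component $C_x$ of $x$ in $X$ is disjoint from $B$: otherwise $C_x$ would be a connected subset of $X$ meeting both $A$ and $B$, contradicting the hypothesis. By the component/quasi-component identity, $C_x$ is the intersection of all clopen subsets of $X$ containing $x$. Since this intersection is disjoint from the compact set $B$, the clopen sets containing $x$ form a family of closed sets whose intersection misses $B$, so by compactness finitely many of them, say $F_1,\dots,F_m$, already have $F_1\cap\dots\cap F_m$ disjoint from $B$; this intersection $U_x$ is clopen, contains $x$, and misses $B$. Next, $\{U_x:x\in A\}$ is an open cover of the compact set $A$, so finitely many of them, $U_{x_1},\dots,U_{x_k}$, cover $A$; their union $U$ is clopen, contains $A$, and is disjoint from $B$. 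Putting $V=X\setminus U$ gives disjoint open sets $U,V$ with $U\cup V=X$, $A\subseteq U$, and $B\subseteq V$; since $U$ and $V$ are clopen and disjoint, $\overline{U}\cap\overline{V}=\emptyset$, so $U\bar{\delta}V$ and in particular $X\setminus\emptyset=U\cup V$ exhibits the empty set as a separator between $A$ and $B$.

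The compactness manipulations — extracting finite subcovers and intersecting finitely many clopen sets — are routine. The one substantive ingredient is the coincidence of connected components with quasi-components in compact Hausdorff spaces, which is where I expect the real content to lie; I would either cite it from a standard reference (for instance Engelking's \emph{General Topology}) or include the short self-contained argument. In the latter case: given a partition of the quasi-component $Q_x$ into two relatively clopen (hence closed in $X$) pieces $P\ni x$ and $R$, use normality of the compact Hausdorff space $X$ to find disjoint open sets $G\supseteq P$ and $H\supseteq R$; trap $Q_x$ inside a single clopen set $F\subseteq G\cup H$ by compactness of $X\setminus(G\cup H)$; then verify that $F\cap G$ is clopen (its closure lies in $F\cap\overline{G}\subseteq F\setminus H\subseteq G$) and contains $x$, which forces $Q_x\subseteq F\cap G$, hence $R=\emptyset$. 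Thus $Q_x$ is connected and equals $C_x$, completing the proof.
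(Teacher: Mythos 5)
Your argument is correct. Note that the paper does not actually prove this lemma --- it is quoted from Nagami's book without proof --- so there is no in-paper argument to compare against; your proof, via the Šura-Bura fact that components coincide with quasi-components in compact Hausdorff spaces, followed by two compactness extractions (first trapping each $x\in A$ in a clopen set missing $B$, then covering $A$ by finitely many such sets), is the standard route and is complete, including the self-contained sketch of the quasi-component identity. One small remark: the finite-intersection step works because the clopen sets containing $x$ are closed under finite intersection, which your choice of $U_x=F_1\cap\dots\cap F_m$ implicitly uses and which is indeed automatic, so nothing is missing; the closing observation that $\overline{U}\cap\overline{V}=\emptyset$ is not needed for the purely topological notion of separator used in the lemma, but it does no harm and in fact matches how the lemma is applied later in the paper.
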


Said differently, this Lemma states that if the empty set is a cut between disjoint closed subsets of a compact Hausdorff space, then it is also a separator between them.

\section{Main Results}\label{main results}

In this final section we will prove that the proximity inductive dimension and the Brouwer dimension coincide on compact Hausdorff spaces. To do this we first characterize cuts within compact Hausdorff spaces.

\begin{proposition}\label{cut characterization}
	Let $X$ be a compact Hausdorff space and $A,B\subseteq X$ nonempty disjoint closed subsets. Then given a closed subset $C\subseteq X$ that is disjoint from $A\cup B$, the following are equivalent:
	\begin{enumerate}
		\item $C$ is a cut in $X$ between $A$ and $B$.
		\item Every closed neighbourhood of $C$ that is disjoint from $A\cup B$ is a separator between $A$ and $B$.
	\end{enumerate}
\end{proposition}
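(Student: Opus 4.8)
The plan is to prove the two implications separately, using Lemma~\ref{important lemma} as the key tool to convert ``cut'' statements into ``separator'' statements inside compact Hausdorff spaces.

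For the direction $(2)\Rightarrow(1)$: suppose every closed neighbourhood of $C$ disjoint from $A\cup B$ is a separator between $A$ and $B$, and let $K$ be a continuum meeting both $A$ and $B$. If $K$ missed $C$, then since $X$ is compact Hausdorff (hence $T_4$) and $K$ and $C$ are disjoint closed sets, I can find a closed neighbourhood $N$ of $C$ with $K\cap N=\emptyset$; shrinking $N$ further I may also assume $N$ is disjoint from $A\cup B$ (as $A\cup B$ is closed and disjoint from $C$). By hypothesis $N$ is a separator between $A$ and $B$, so $X\setminus N=U\cup V$ with $U,V$ disjoint open, $A\subseteq U$, $B\subseteq V$. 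Then $K\subseteq X\setminus N=U\cup V$ disconnects the continuum $K$ into two nonempty relatively open pieces $K\cap U$ and $K\cap V$, a contradiction. Hence $K\cap C\neq\emptyset$, so $C$ is a cut.

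For the direction $(1)\Rightarrow(2)$: suppose $C$ is a cut between $A$ and $B$, and let $N$ be any closed neighbourhood of $C$ disjoint from $A\cup B$; I must show $N$ separates $A$ and $B$. The idea is to pass to the subspace $Y:=X\setminus \operatorname{int}(N)$, a compact Hausdorff space containing $A$ and $B$ as disjoint closed subsets, and argue that no connected subset of $Y$ meets both $A$ and $B$. Indeed, if $K\subseteq Y$ were connected with $K\cap A\neq\emptyset\neq K\cap B$, then its closure $\overline{K}$ (in $X$, equivalently in $Y$ since $Y$ is closed) is a continuum meeting both $A$ and $B$, so by the cut property $\overline{K}\cap C\neq\emptyset$; but $C\subseteq\operatorname{int}(N)$ (since $N$ is a neighbourhood of $C$), so $\overline{K}$ meets $\operatorname{int}(N)$, contradicting $\overline{K}\subseteq Y$. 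Thus no such $K$ exists, and Lemma~\ref{important lemma} applied to the compact Hausdorff space $Y$ gives that $\emptyset$ separates $A$ and $B$ in $Y$: that is, $Y=U'\cup V'$ with $U',V'$ disjoint and relatively open in $Y$, $A\subseteq U'$, $B\subseteq V'$. Finally I check that $U'$ and $V'$ are in fact open in $X$ (because $Y$ is a closed subspace and $U',V'$ are also contained in the open set $X\setminus N\subseteq\operatorname{int}(Y)$... more carefully: $U'=Y\setminus V'$ is closed in $Y$ hence closed in $X$, and likewise $V'$; then $X\setminus N=(X\setminus N)\cap Y$ splits as the disjoint union of the two relatively closed, relatively open sets $U'\cap(X\setminus N)$ and $V'\cap(X\setminus N)$, which are open in the open set $X\setminus N$ and hence open in $X$), yielding that $N$ is a separator between $A$ and $B$ in $X$.

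The main obstacle I anticipate is the bookkeeping in $(1)\Rightarrow(2)$: one must be careful that ``closed neighbourhood'' gives $C\subseteq\operatorname{int}(N)$ with room to spare, that the subspace $Y=X\setminus\operatorname{int}(N)$ is genuinely compact Hausdorff (it is closed in $X$), and—most delicately—that the separation of $A$ and $B$ obtained inside $Y$ via Lemma~\ref{important lemma} can be upgraded to a genuine separation in $X$ witnessing that $N$ itself (not merely $N\cap Y=N\setminus\operatorname{int}(N)$, the topological boundary) is the separator. The cleanest way to handle this last point is to take the disjoint open-in-$X$ sets to be $U:=\operatorname{int}_X(U'\cup\operatorname{int}(N))$-style enlargements, or more simply to observe directly that $X\setminus N$ is open in $X$, is the disjoint union of the two sets $U'\setminus N$ and $V'\setminus N$, and that these two sets are open in $X\setminus N$ (hence in $X$) because $U',V'$ are open in $Y\supseteq X\setminus N$.
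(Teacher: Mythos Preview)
Your proof is correct and follows essentially the same route as the paper's: for $(2)\Rightarrow(1)$ you use normality to produce a closed neighbourhood of $C$ missing a hypothetical continuum $K$ and then disconnect $K$, and for $(1)\Rightarrow(2)$ you pass to the compact Hausdorff subspace $Y=X\setminus\operatorname{int}(N)$, invoke Lemma~\ref{important lemma} there, and intersect the resulting clopen decomposition of $Y$ with $X\setminus N$ to get open-in-$X$ sets witnessing that $N$ separates. The only differences are cosmetic---your exposition is a bit more exploratory (and in fact slightly more careful than the paper about taking the closure of a connected set to obtain a continuum before applying the cut hypothesis), but the argument is the same.
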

\begin{proof}
	Let $X,A,B,$ and $C$ be given. We may assume without loss of generality that $A$ and $B$ are nonempty. Otherwise every closed set disjoint from $A\cup B$ is a cut between $A$ and $B$.
	\vspace{\baselineskip}
	
	($(2)\implies(1)$) Assume that every closed neighbourhood $D$ of $C$ that is disjoint from $A\cup B$ is a separator between $A$ and $B$. If $C=\emptyset$ then $C$ is a closed neighbourhood of itself that is disjoint from $A$ and $B$, which would imply that the empty set is a separator between $A$ and $B$, which would imply that $C$ is a cut between $A$ and $B$. Assume then that $C\neq\emptyset$ and assume further towards a contradiction that $C$ is not a cut. Then there is a continuum $K\subseteq X$ such that $K\cap A\neq\emptyset$, $K\cap B\neq\emptyset$, but $K\cap C=\emptyset$. As $K$ and $C$ are disjoint closed sets there is a closed neighbourhood $D$ of $C$ that is disjoint from $K$. Then $K\subseteq X\setminus D=U\cup V$ where $U$ and $V$ are disjoint open sets of $X$ containing $A$ and $B$ respectively. This would imply that $K\cap U$ and $K\cap V$ are open subsets of $K$ that witness $K$ being disconnected, contradicting the connectedness of $K$. Therefore $C$ is a cut between $A$ and $B$.
	\vspace{\baselineskip}
	
	($(1)\implies(2)$) Now assume that $C$ is a cut between $A$ and $B$. Let $D$ be a closed neighbourhood of $C$ that is disjoint from $A$ and $B$. Then $C\subseteq int(D)$ and because $C$ is a cut between $A$ and $B$ we have that there is no connected set $K$ in the compact Hausdorff space $X\setminus int(D)$ that intersects both $A$ and $B$ nontrivially. Therefore by Lemma \ref{important lemma} the empty set is a separator in $X\setminus int(D)$ between $A$ and $B$. Then let $U$ and $V$ be disjoint open subset of $X\setminus int(D)$ that contain $A$ and $B$, respectively. Then $U^{\prime}:=U\cap (X\setminus D)$ and $V^{\prime}:=V\cap(X\setminus D)$ are disjoint open subsets of $X\setminus D$ and consequently disjoint open subsets of $X$ such that $X\setminus D=U^{\prime}\cup V^{\prime}$, $A\subseteq U^{\prime}$, and $B\subseteq V^{\prime}$. Therefore $D$ is a separator in $X$ between $A$ and $B$.
\end{proof}

\begin{proposition}\label{freeing sets are cuts}
	Let $X$ be a compact Hausdorff space, and $A,B\subseteq X$ disjoint closed sets. Given a closed subset $C\subseteq X$ that is disjoint from $A$ and $B$, the following are equivalent:
	\begin{enumerate}
		\item $C$ is a cut between $A$ and $B$.
		\item $C$ frees $A$ and $B$.
	\end{enumerate}
\end{proposition}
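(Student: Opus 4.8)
The plan is to push everything through Proposition~\ref{cut characterization}, which already characterizes cuts via closed neighbourhoods being topological separators, and then to bridge the two small gaps: between "$\delta$-neighbourhood of $C$" and "closed neighbourhood of $C$", and between "$\delta$-separator" and "topological separator". As in Proposition~\ref{cut characterization} we may assume $A$ and $B$ are nonempty, the case $A=\emptyset$ (or $B=\emptyset$) being trivial on both sides ($C$ is vacuously a cut, and one checks directly that $C$ frees $A$ and $B$ by taking $U=\emptyset$). First I would record two easy facts about the unique proximity on a compact Hausdorff space $X$ (Proposition~\ref{unique proximity on compact hausdorff spaces}): \emph{(i)} since $C,A,B$ are closed and $C\cap(A\cup B)=\emptyset$, the condition $C\bar\delta(A\cup B)$ holds automatically, and a set $D$ is a $\delta$-neighbourhood of the closed set $C$ if and only if $C\subseteq int(D)$ (because $C\ll D$ unwinds to $C\cap\overline{X\setminus D}=\emptyset$, and $\overline{X\setminus D}=X\setminus int(D)$); \emph{(ii)} every $\delta$-separator between $A$ and $B$ is in particular a topological separator between $A$ and $B$: if $X\setminus D=U\cup V$ with $U\cap V=\emptyset$, $\overline U\cap\overline V=\emptyset$, $A\subseteq U$, $B\subseteq V$, then $U^{*}:=(X\setminus D)\setminus\overline V$ and $V^{*}:=(X\setminus D)\setminus\overline U$ are disjoint open sets that still cover $X\setminus D$ and still contain $A$ and $B$. (The converse of \emph{(ii)} is false --- $\{1/2\}$ separates $\{0\}$ from $\{1\}$ in $[0,1]$ but is not a $\delta$-separator --- but only the stated direction will be needed.)

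For $(2)\implies(1)$, assume $C$ frees $A$ and $B$. By Proposition~\ref{cut characterization} it suffices to show that every closed neighbourhood $D$ of $C$ disjoint from $A\cup B$ is a topological separator between $A$ and $B$. Such a $D$ satisfies $C\subseteq int(D)$, so by \emph{(i)} it is a $\delta$-neighbourhood of $C$ disjoint from $A\cup B$; by the freeing hypothesis it is therefore a $\delta$-separator, and by \emph{(ii)} a topological separator. Hence $C$ is a cut.

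For $(1)\implies(2)$, assume $C$ is a cut. The condition $C\bar\delta(A\cup B)$ is automatic, so let $N$ be an arbitrary $\delta$-neighbourhood of $C$ disjoint from $A\cup B$; by \emph{(i)}, $C\subseteq int(N)$. Using normality of $X$, choose a closed set $D$ with $C\subseteq int(D)$ and $D\subseteq int(N)$, so $D$ is a closed neighbourhood of $C$ disjoint from $A\cup B$. By Proposition~\ref{cut characterization}, $D$ is a topological separator: $X\setminus D=U\cup V$ with $U,V$ disjoint open, $A\subseteq U$, $B\subseteq V$. Put $U'=U\cap(X\setminus N)$ and $V'=V\cap(X\setminus N)$. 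Since $N$ is disjoint from $A\cup B$ and $N\supseteq D$, these are disjoint sets covering $X\setminus N$ with $A\subseteq U'$ and $B\subseteq V'$. Finally $\overline{U'}\cap\overline{V'}\subseteq\overline U\cap\overline V\cap\overline{X\setminus N}$, and this is empty because $\overline U\cap\overline V\subseteq D$ (the inclusions $\overline U\subseteq D\cup U$, $\overline V\subseteq D\cup V$ hold since $U,V$ are complementary open subsets of the open set $X\setminus D$), while $D\subseteq int(N)$ and $\overline{X\setminus N}=X\setminus int(N)$. Thus $U'\bar\delta V'$, so $N$ is a $\delta$-separator between $A$ and $B$, and $C$ frees $A$ and $B$.

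The main obstacle is the bookkeeping in $(1)\implies(2)$: interposing a closed neighbourhood $D$ of $C$ inside $int(N)$ via normality, and then verifying that the two pieces of $X\setminus N$ obtained by intersecting the separator decomposition of $X\setminus D$ with $X\setminus N$ genuinely have disjoint closures. The conceptual point that keeps both directions short is that $\delta$-separators form a strictly narrower class than topological separators, so we only ever invoke the easy implication "$\delta$-separator $\implies$ topological separator", and in $(1)\implies(2)$ we construct the needed $\delta$-separator directly rather than attempting to promote an arbitrary topological one.
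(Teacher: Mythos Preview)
Your proof is correct and follows essentially the same route as the paper: both directions are routed through Proposition~\ref{cut characterization}, with $(2)\Rightarrow(1)$ handled by noting that closed neighbourhoods are $\delta$-neighbourhoods and that $\delta$-separators are topological separators, and $(1)\Rightarrow(2)$ handled by interposing a closed neighbourhood $D$ of $C$ inside $int(N)$, applying Proposition~\ref{cut characterization} to get a separator decomposition $X\setminus D=U\cup V$, and then intersecting with $X\setminus N$ to obtain pieces whose closures are trapped in $D\cap\overline{X\setminus N}=\emptyset$. The only cosmetic difference is that the paper first replaces the given $\delta$-neighbourhood by its interior before choosing the closed neighbourhood, whereas you go straight to $D\subseteq int(N)$ via normality; the resulting computations are identical.
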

\begin{proof}
Let $X,A,B,$ and $C$ be given. As before we may assume without loss of generality that $A,B,$ and $C$ are nonempty as the result is trivial otherwise. 
\vspace{\baselineskip}

($(2)\implies(1)$) Assume that $C$ frees $A$ and $B$. Every closed neighbourhood $D$ of $C$ that is disjoint from $A$ and $B$ is a $\delta$-neighbourhood of $C$. Then by the definition of a freeing set we have that $X\setminus D=U\cup V$ where $A\subseteq U$, $B\subseteq V$, and $U\bar{\delta}V$. Then $U\bar{\delta} V$ in the subspace $X\setminus D$. Because $U\bar{\delta}V$ implies that $U$ and $V$ are disjoint we then have that $U$ and $V$ are open in $X\setminus D$, and are therefore open in $X$. We then in fact have that $D$ is a separator between $A$ and $B$. As $D$ was arbitrary we then have that $C$ is a cut between $A$ and $B$ by Proposition \ref{cut characterization}.
\vspace{\baselineskip}

($(1)\implies(2)$) Assume that $C$ is a cut between $A$ and $B$ and let $D\subseteq X$ be a $\delta$-neighbourhood of $C$ that is disjoint from $A\cup B$. We may assume that $D$ is an open neighbourhood of $C$ because if $C\ll D$ then $C\subseteq int(D)$ by Lemma \ref{proximity neighbourhoods are actual neighbourhoods} and if a subset of $D$ is a $\delta$-separator then $D$ is as well. We then let $D^{\prime}$ be a closed neighbourhood of $C$ such that $C\subseteq D^{\prime}\subseteq D$ and $D^{\prime}\cap(X\setminus D)=\emptyset$. Then $D^{\prime}$ is a closed neighbourhood of $C$ that is disjoint from $A$ and $B$, so by Proposition \ref{cut characterization} $D^{\prime}$ is a separator in $X$ between $A$ and $B$. Then let $U$ and $V$ be disjoint open subsets of $X$ so that $X\setminus D^{\prime}=U\cup V$, $A\subseteq U$, and $B\subseteq V$. We then consider

\[U^{\prime}=U\cap(X\setminus D)\text{ and } V^{\prime}:=V\cap(X\setminus D)\]

and claim that $U^{\prime}\bar{\delta}V^{\prime}$. To see this we note that because $U^{\prime}$ and $V^{\prime}$ are subsets of the closed set $X\setminus D$ we must have that $\overline{U^{\prime}}$ and $\overline{V^{\prime}}$ are also subsets of $X\setminus D$. Then $\overline{U^{\prime}}\cap\overline{V^{\prime}}\subseteq X\setminus D$. However, as $U^{\prime}\subseteq U$ and $V^{\prime}\subseteq V$ we must have that $\overline{U^{\prime}}\subseteq\overline{U}$ and $\overline{V^{\prime}}\subseteq\overline{V}$. Therefore we have

\[\overline{U^{\prime}}\cap\overline{V^{\prime}}\subseteq X\setminus D\text{ and }\overline{U^{\prime}}\cap\overline{V^{\prime}}\subseteq\overline{U}\cap\overline{V}\]

However, $\overline{U}\cap\overline{V}$ is a subset of $D^{\prime}$ and $D^{\prime}\cap(X\setminus D)=\emptyset$. Therefore we have that $\overline{U}^{\prime}\cap\overline{V}^{\prime}=\emptyset$, which gives us that $U^{\prime}\bar{\delta}V^{\prime}$. In summary, $X\setminus D$ is the union of the disjoint sets $U^{\prime}$ and $V^{\prime}$ that contain $A$ and $B$ respectively, and are not close. That is, $D$ is a $\delta$-separator between $A$ and $B$, which implies that $C$ frees $A$ and $B$.
\end{proof}

\begin{theorem}\label{proximity inductive is equal to brouwer dimension on compact hausdorff spaces}
	For every compact Hausdorff space, $Dg(X)=\delta Ind(X)$.
\end{theorem}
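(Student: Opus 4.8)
The plan is to prove by induction on $n\geq -1$ that, for every compact Hausdorff space $X$, one has $Dg(X)\leq n$ if and only if $\delta Ind(X)\leq n$; this immediately gives $Dg(X)=\delta Ind(X)$. The base case $n=-1$ is trivial, since each of $Dg(X)\leq -1$ and $\delta Ind(X)\leq -1$ is equivalent to $X=\emptyset$.

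Before the inductive step I would record two bookkeeping facts that align the two definitions. First, by Proposition \ref{unique proximity on compact hausdorff spaces}, $X$ carries a unique compatible proximity $\delta$, and for closed sets $A,B\subseteq X$ the relation $A\bar{\delta}B$ is simply $A\cap B=\emptyset$; hence, using the remark following Proposition \ref{proximtiy inductive dimension is greater than proximity dimension} to restrict $\delta Ind$ to closed $A,B$ and closed freeing sets, the pairs quantified over in the definition of $Dg(X)\leq n$ (disjoint closed sets) and in the definition of $\delta Ind(X)\leq n$ (closed $A,B$ with $A\bar{\delta}B$) are literally the same. Second, any closed subset $C\subseteq X$ — in particular any cut, or any freeing set — is again compact Hausdorff, and its subspace proximity is the unique proximity compatible with its subspace topology; so $Dg(C)$ and $\delta Ind(C)$ are unambiguous, and the induction hypothesis applies to $C$.

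With these observations in place, the inductive step is essentially a translation via Proposition \ref{freeing sets are cuts}. Fix disjoint closed $A,B\subseteq X$. If $Dg(X)\leq n$, choose a (closed) cut $C$ between $A$ and $B$ with $Dg(C)\leq n-1$; Proposition \ref{freeing sets are cuts} says $C$ frees $A$ and $B$, and the induction hypothesis gives $\delta Ind(C)\leq n-1$, so $C$ is an admissible freeing set and $\delta Ind(X)\leq n$. Conversely, if $\delta Ind(X)\leq n$, choose a closed freeing set $H$ for $A$ and $B$ with $\delta Ind(H)\leq n-1$; since $H\bar{\delta}(A\cup B)$ forces $H$ to be disjoint from $A\cup B$, Proposition \ref{freeing sets are cuts} says $H$ is a cut between $A$ and $B$, and the induction hypothesis gives $Dg(H)\leq n-1$, whence $Dg(X)\leq n$. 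This closes the induction.

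I do not expect a serious obstacle here, since the substantive work has already been isolated into Lemma \ref{important lemma} and Propositions \ref{cut characterization} and \ref{freeing sets are cuts}. The only point demanding care is the internal consistency of the two inductive schemes under passage to subspaces — namely, that "cut" and "freeing set" are being evaluated with respect to the same (unique) proximity on the closed subspace $C$, and that the reduction to closed $A$, $B$, and $H$ in the definition of $\delta Ind$ is legitimate — and both of these are handled by the uniqueness of the proximity on a compact Hausdorff space together with the remark following Proposition \ref{proximtiy inductive dimension is greater than proximity dimension}.
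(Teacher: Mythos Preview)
Your proposal is correct and follows essentially the same route as the paper: an induction whose inductive step reduces immediately to Proposition \ref{freeing sets are cuts}, with the base case $n=-1$ trivial. The paper's write-up is terser (it presents one direction and declares the other ``similar''), whereas you package both directions into a single biconditional induction and spell out the bookkeeping about the unique compatible proximity and the passage to closed subspaces, but there is no substantive difference in strategy.
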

\begin{proof}
We will show that $\delta Ind(X)\geq Dg(X)$ by induction on $\delta Ind(X)$. The result is obvious when $\delta Ind(X)=-1$. Assume then that the result holds for $\delta Ind(X)<n$ for $n\geq 0$ and assume that $\delta Ind(X)=n$. If $A$ and $B$ are disjoint closed subsets of $X$ then there must be a closed set $C\subseteq X$ that frees $A$ and $B$ and satisfies $\delta Ind(C)\leq n-1$. By Proposition \ref{freeing sets are cuts} $C$ is a cut between $A$ and $B$, and the inductive hypothesis gives us that $Dg(C)\leq\delta Ind(C)\leq n-1$. Therefore $Dg(X)\leq n$. Clearly, if $\delta Ind(X)=\infty$ then $Dg(X)\leq\delta Ind(X)$. Therefore $\delta Ind(X)\geq Dg(X)$. The argument showing that $Dg(X)\geq\delta Ind(X)$ is a similar induction argument. 
\end{proof}

In \cite{spacewithdifferingdimensions} a compact Hausdorff space $B$ was constructed with the property that $Dg(X)=3$ and $dim(X)=2$. Then the conjunction of Theorem \ref{proximity inductive is equal to brouwer dimension on compact hausdorff spaces} and Theorem \ref{proximity dimension is covering dimension in compact hausdorff spaces} grants us the following corollary.

\begin{corollary}
	There is a compact Hausdorff space $B$ such that $\delta Ind(B)=3$ and $\delta d(B)=2$.
\end{corollary}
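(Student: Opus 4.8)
The plan is to assemble the corollary directly from the two dimension-coincidence theorems already available, applied to Fedorchuk's space. First I would recall the space $B$ constructed in \cite{spacewithdifferingdimensions}: it is a compact Hausdorff space satisfying $Dg(B)=3$ and $dim(B)=2$. Since $B$ is compact Hausdorff, Proposition \ref{unique proximity on compact hausdorff spaces} guarantees there is exactly one proximity compatible with its topology, so both $\delta Ind(B)$ and $\delta d(B)$ are unambiguously defined.

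Next I would apply Theorem \ref{proximity inductive is equal to brouwer dimension on compact hausdorff spaces} to $B$, which gives $\delta Ind(B)=Dg(B)=3$. Then I would apply Theorem \ref{proximity dimension is covering dimension in compact hausdorff spaces} to $B$, which gives $\delta d(B)=dim(B)=2$. Combining the two equalities yields $\delta Ind(B)=3\neq 2=\delta d(B)$, which is exactly the asserted statement.

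There is no real obstacle here: all of the substantive work is carried out either in Fedorchuk's construction or in the preceding two theorems, and the corollary is a formal consequence. The only point worth stating explicitly — which I would include for clarity — is that the uniqueness of the proximity on a compact Hausdorff space is what makes the notation $\delta Ind(B)$ and $\delta d(B)$ meaningful without specifying which proximity is meant, so that the chain of equalities is legitimate.

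\begin{proof}
In \cite{spacewithdifferingdimensions} Fedorchuk constructs a compact Hausdorff space $B$ with $Dg(B)=3$ and $dim(B)=2$. Since $B$ is compact Hausdorff, by Proposition \ref{unique proximity on compact hausdorff spaces} there is a unique proximity compatible with the topology of $B$, so $\delta Ind(B)$ and $\delta d(B)$ are well defined. By Theorem \ref{proximity inductive is equal to brouwer dimension on compact hausdorff spaces} we have $\delta Ind(B)=Dg(B)=3$, and by Theorem \ref{proximity dimension is covering dimension in compact hausdorff spaces} we have $\delta d(B)=dim(B)=2$. Hence $\delta Ind(B)=3$ and $\delta d(B)=2$.
\end{proof}
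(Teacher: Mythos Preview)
Your proof is correct and follows exactly the same approach as the paper: invoke Fedorchuk's space with $Dg(B)=3$ and $dim(B)=2$, then apply Theorem \ref{proximity inductive is equal to brouwer dimension on compact hausdorff spaces} and Theorem \ref{proximity dimension is covering dimension in compact hausdorff spaces} to convert these into $\delta Ind(B)=3$ and $\delta d(B)=2$. Your added remark about Proposition \ref{unique proximity on compact hausdorff spaces} ensuring well-definedness is a nice clarification the paper leaves implicit.
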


\bibliographystyle{abbrv}
\bibliography{Boundaries_of_coarse_proximity_spaces_and_boundaries_of_compactifications}{}

\end{document}